\theoremstyle{plain}
\newtheorem{lemma}{Lemma}
\newtheorem{coro}[lemma]{Corollary}
\newtheorem{theorem}[lemma]{Theorem}
\newtheorem*{prop*}{Proposition}
\theoremstyle{definition}
\newtheorem{remark}[lemma]{Remark}
\theoremstyle{remark}
\begin{document}
\title[Holomorphic curvature and canonical bundle]{A remark on our paper ``Negative Holomorphic curvature and positive canonical bundle''}

\author{Damin Wu}
\address{Department of Mathematics\\
University of Connecticut\\
196 Auditorium Road,
Storrs, CT 06269-3009, USA}

\email{damin.wu@uconn.edu}

\author{Shing--Tung Yau}
\address{Department of Mathematics \\
				Harvard University \\
				One Oxford Street, Cambridge MA 02138}
\email{yau@math.harvard.edu}
\thanks{The first author was partially supported by the NSF grants DMS-1308837 and DMS-1611745. The second author was partially supported by the NSF grant DMS-0804454.}
\maketitle


\maketitle

This is a continuation of our first paper in \cite{Wu-Yau:2015}.
There are two purposes of this paper: One is to give a proof of the main result in \cite{Wu-Yau:2015} without going through the argument depending on numerical effectiveness.
The other one is to provide a proof of our conjecture, mentioned in \cite{Tosatti-Yang:2015}, where the assumption of negative holomorphic sectional curvature is dropped to quasi-negative.
We should note that a solution to our conjecture is also provided by Diverio-Trapani~\cite{Diverio-Trapani:2016}. Both proofs depend on our argument in \cite{Wu-Yau:2015}.
But our argument here makes use of the argument given by the second author and Cheng in \cite{Cheng-Yau:1975}.

The proof of Theorem~\ref{th:WY1} below is obtained by us in 2015, and has been distributed in the community; e.g. it was presented by the second author in the birthday conference of Richard Schoen on June 21, 2015. We have also settled the complete noncompact case, which, together with applications, will appear soon.

\section{Negative holomorphic sectional curvature}
By using the argument in \cite{Wu-Yau:2015}, we can directly derive the following result in the K\"ahler setting, without using the notion of nefness. 
\begin{theorem} \label{th:WY1}
Let $(X, \omega)$ be a compact K\"ahler manifold with negative holomorphic sectional curvature. Then $X$ admits a K\"ahler-Einstein metric of negative Ricci curvature. In particular, the canonical bundle of $X$ is ample.
\end{theorem}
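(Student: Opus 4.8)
The plan is to construct a K\"ahler--Einstein metric $\omega_{\ke}$ with $\Ric(\omega_{\ke})=-\omega_{\ke}$ directly by a continuity method, and then to read off ampleness of $K_X$ from the fact that $\omega_{\ke}=-\Ric(\omega_{\ke})$ is a K\"ahler form representing $c_1(K_X)$. Since we do not yet know $c_1(K_X)>0$, the Aubin--Yau theorem is not immediately available; instead the a priori estimates produced along the path must \emph{themselves} force the canonical class to be positive. For $t\in(0,1]$ I would study the family
\[
\Ric(\omega_t)=-\omega_t+t\,\omega,\qquad [\omega_t]=c_1(K_X)+t[\omega],
\]
which, writing $\hat\omega_t=t\omega-\Ric(\omega)$ for the obvious reference form in this class, is equivalent to the complex Monge--Amp\`ere equation
\[
\bigl(\hat\omega_t+\sqrt{-1}\,\p\bar\p\,\varphi_t\bigr)^{n}=e^{\varphi_t}\,\omega^{n},\qquad \omega_t:=\hat\omega_t+\sqrt{-1}\,\p\bar\p\,\varphi_t>0 .
\]
Openness is routine: the linearization at a solution is $\Delta_{\omega_t}-1$, which is invertible because the $-\omega_t$ on the right-hand side yields a strictly negative zeroth-order term, so the implicit function theorem applies on the appropriate H\"older spaces; solvability at the large-$t$ end is handled by the standard Aubin--Yau machinery. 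The entire difficulty is therefore closedness, i.e. estimates on $\varphi_t$ that are uniform as $t\to0^{+}$.

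The heart of the argument is precisely this set of uniform estimates, and it is where the negative holomorphic sectional curvature enters. I would first bound $\tr_{\omega_t}\omega$ from above via the Chern--Lu/Yau Schwarz-lemma inequality applied to the identity map $(X,\omega_t)\to(X,\omega)$, which schematically reads
\[
\Delta_{\omega_t}\log\tr_{\omega_t}\omega\ \geq\ -\frac{\tr_{\omega_t}\Ric(\omega_t)}{\tr_{\omega_t}\omega}-\frac{B}{\tr_{\omega_t}\omega},
\]
where $B$ is a contraction of the curvature tensor of $\omega$ against $\omega_t$. The crucial point, via the algebraic lemma of Royden refining Yau's Schwarz lemma, is that holomorphic sectional curvature bounded above by $-\kappa<0$ forces this bisectional term to satisfy $B\leq-\tfrac{\kappa}{2}(\tr_{\omega_t}\omega)^{2}$; using $\Ric(\omega_t)=-\omega_t+t\omega$ to control the first term, one obtains
\[
\Delta_{\omega_t}\log\tr_{\omega_t}\omega\ \geq\ \tfrac{\kappa}{2}\,\tr_{\omega_t}\omega-C .
\]
Applying the maximum principle to $\log\tr_{\omega_t}\omega-A\varphi_t$ for a suitable constant $A$ then yields $\tr_{\omega_t}\omega\leq C$, i.e. $\omega_t\geq c\,\omega$, with $c$ controlled once $\varphi_t$ is. The companion $C^{0}$ bound for $\varphi_t$ I would extract in the spirit of Cheng--Yau~\cite{Cheng-Yau:1975} by the maximum principle together with a Moser/K\l odziej-type $L^{\infty}$ estimate; combining it with the equation and the lower bound $\omega_t\geq c\omega$ gives a matching upper bound $\omega_t\leq C\omega$, after which Evans--Krylov and Schauder theory upgrade everything to uniform higher-order estimates.

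Finally I would let $t\to0^{+}$. The uniform estimates produce a smooth limit metric $\omega_{\ke}$ solving $\Ric(\omega_{\ke})=-\omega_{\ke}$, and the lower bound $\omega_t\geq c\,\omega$ with $c$ independent of $t$ is decisive: it keeps $\omega_{\ke}$ positive definite, so $\omega_{\ke}$ is a genuine K\"ahler metric representing $c_1(K_X)$, whence $c_1(K_X)$ is a K\"ahler class and $K_X$ is ample by the Kodaira embedding theorem (in particular $c_1(K_X)^{n}=\lim_{t\to0}\int_X\omega_t^{n}\geq c^{n}\vol_\omega(X)>0$). The main obstacle I anticipate is exactly this non-collapsing as $t\to0$: unlike in the Aubin--Yau setting one may \emph{not} assume $c_1(K_X)^{n}>0$ in advance, so the uniform lower volume bound must be squeezed out of the curvature hypothesis alone, and the delicate technical work is keeping the constants in the Schwarz-lemma and $C^{0}$ estimates independent of $t$ all the way down to $t=0$ --- which is precisely the point at which the Cheng--Yau maximum-principle techniques replace the earlier nefness/numerical-effectiveness argument of \cite{Wu-Yau:2015}.
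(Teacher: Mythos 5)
Your skeleton coincides with the paper's own proof: the identical continuity path $\Ric(\omega_t)=-\omega_t+t\omega$ (your reference form $\hat\omega_t=t\omega-\Ric(\omega)$ is exactly the paper's $t\omega+dd^c\log\omega^n$), the same Monge--Amp\`ere equation $\omega_t^n=e^{\varphi_t}\omega^n$, openness via invertibility of $\Delta_{\omega_t}-1$, Yau's theorem at the large-$t$ end, and the Chern--Lu/Royden Schwarz lemma as the decisive curvature input. The gap is in the dependency structure of your a priori estimates, and it sits at exactly the point on which the whole theorem turns. You propose to prove the trace bound by applying the maximum principle to $\log\tr_{\omega_t}\omega-A\varphi_t$, so that the constant is ``controlled once $\varphi_t$ is,'' and to obtain the $C^0$ bound on $\varphi_t$ separately by ``the maximum principle together with a Moser/Ko\l odziej-type $L^\infty$ estimate.'' That $C^0$ step would fail: at a minimum point of $\varphi_t$ the maximum principle gives $e^{\varphi_t}\omega^n=\omega_t^n\ge\hat\omega_t^n$ only when $\hat\omega_t\ge 0$ there (for Hermitian forms $A\ge B$ with $A>0$, $\det A\ge\det B$ requires $B\ge0$), and Ko\l odziej/Moser-type estimates likewise need (semi-)positivity of the reference class with constants surviving $t\to0$. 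But for small $t$ the form $\hat\omega_t=t\omega-\Ric(\omega)$ has no sign unless $K_X$ is already known to be nef --- which is precisely what is to be proven, and precisely the assumption this paper's method is designed to avoid. So under your ordering the $C^0$ bound is unobtainable, the trace bound you made depend on it collapses, and the ``$c$ independent of $t$'' claimed in your final paragraph is never actually established.

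The repair is contained in your own displayed inequality. Since $H(\omega)\le-\kappa<0$ with $\kappa$ uniform by compactness, the zeroth-order coefficient in $\Delta_{\omega_t}\log\tr_{\omega_t}\omega\ \ge\ \tfrac{\kappa}{2}\,\tr_{\omega_t}\omega-C$ is strictly positive independently of $t$ and of $\varphi_t$; hence the maximum principle applied \emph{directly} to $\log\tr_{\omega_t}\omega$, with no $-A\varphi_t$ correction, already yields $\tr_{\omega_t}\omega\le 2C/\kappa$, i.e.\ $\omega_t\ge c\,\omega$ with $c$ uniform in $t$ and requiring no knowledge of $\varphi_t$. This is what the paper does with $S=n\,\omega_t^{n-1}\wedge\omega/\omega_t^n$ via inequality \eqref{eq:WY-Sch}. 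The $C^0$ lower bound is then read off the equation itself, $e^{\varphi_t}=\omega_t^n/\omega^n\ge c^n$, while the upper bound $\max_X\varphi_t\le C$ comes from the maximum principle (which is legitimate at a maximum, since there $\hat\omega_t\ge\omega_t>0$). With the estimates in this order --- Schwarz lemma first, $C^0$ second --- the rest of your outline (upper bound on $\omega_t$, Evans--Krylov/Schauder, the limit $t\to0$, and ampleness of $K_X$) goes through exactly as you describe.
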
 
\begin{proof}
  For $t > 0$, consider the Monge-Amp\`ere type equation
  \begin{equation*} 
  (t \omega + dd^c \log \omega^n + dd^c u_t)^n = e^{u_t} \omega^n. \tag*{$(\textup{MA})_t$}
  \end{equation*}
  Since $\omega>0$ and $X$ is compact, there exists a sufficiently large constant $t_1 > 1$ such that $t_1 \omega + dd^c \log \omega^n > 0$ on $X$.
  Fix an nonnegative integer $k$ and $0 < \alpha <1$. Denote by $C^{k,\alpha}(X)$ the H\"older space with respect to fixed metric $\omega$. Define
  \begin{equation} \label{eq:defI}
    \begin{split}
     I  = \{t \in [0, t_1] \mid \; &  \textup{there is a $u_t \in C^{k+2,\alpha}(X)$ satisfying $(\textup{MA})_t$ and} \\
              & \textup{$t \omega + dd^c \log \omega^n + dd^c u_t > 0$}\}.
   \end{split}
  \end{equation}
  First, note that $I \ne \emptyset$, since $t_1 \in I$. Indeed, $(\textup{MA})_{t_1}$ can be written as
  \[
      (t_1 \omega + dd^c \log \omega^n + dd^c u_{t_1})^n = e^{u_{t_1} + f_1} (t_1 \omega + dd^c \log \omega^n)^n, 
  \]
  where 
  \[
      f_1 = \log \frac{\omega^n}{(t_1 \omega + dd^c \log \omega^n)^n} \in C^{\infty}(X).
  \]
  Applying \cite{Yau:1978:Calabi} obtains a solution $u_{t_1} \in C^{\infty}(X)$.
  
  That $I$ is open in $[0, t_1]$ follows from the implicit function theorem. Indeed, let $t_0 \in I$ with corresponding function $u_{t_0} \in C^{k+2,\alpha}(X)$. Then, there exists a small neighborhood $J$ of $t_0$ in $[0, t_1]$ and a small neighborhood $U$ of $u_{t_0}$ in $C^{k+2,\alpha}(X)$ such that
 \[
    t \omega + dd^c \log \omega^n + dd^c v > 0 \quad \textup{for all $t \in J$ and $v \in U$}.
 \]
 Define a map $\Phi: J \times U \to C^{k,\alpha}(X)$ by
  \[
     \Phi (t, v) = \log \frac{(t \omega + dd^c \log \omega^n + dd^c v)^n}{\omega^n} - v.
  \]
Then, the linearization is given by
\[
   \Phi_{u_{t_0}} (t_0, u_{t_0}) h = \left. \frac{d}{ds} \Phi(t_0, u_{t_0} + s h) \right|_{s = 0} = (\Delta_{t_0} - 1) h
\]
which is invertible from $C^{k+2,\alpha}(X)$ to $C^{k,\alpha}(X)$, where $\Delta_{t_0}$ stands for the Laplacian of metric $t_0 \omega + dd^c \log \omega^n + dd^c u_{t_0}$. Thus, we can apply the implicit function to obtain the openness of $I$.

The closedness is contained in the proof of Theorem 7 in \cite{Wu-Yau:2015}. More precisely, for $t \in I$ we denote
\[
   \omega_t = t \omega + dd^c \log \omega^n +dd^c u_t, \quad \textup{and} \quad S = \frac{n \omega_t^{n-1} \wedge \omega}{\omega_t^n},
\] 
Then $(\textup{MA})_t$ becomes $\omega_t^n = e^{u_t} \omega^n$. It follows that
\[
   dd^c \log \omega_t^n = dd^c \log \omega^n + dd^c u_t = \omega_t - t \omega.
\]
It follows from \cite[Proposition 9]{Wu-Yau:2015} that
\begin{equation} \label{eq:WY-Sch}
   \Delta' \log S \ge \Big[\frac{t}{n} + \frac{(n+1)\kappa}{2n} \Big] S - 1,
\end{equation}
where $\Delta'$ is the Laplacian of $\omega_t$ and $\kappa>0$ is a constant such that $- \kappa$ is the upper bound of the holomorphic sectional curvature $H(\omega)$ of $\omega$. Thus,
\[
   S \le \frac{2n}{\kappa (n+1)}.
\]
On the other hand, applying the maximum principle to $(\textup{MA})_t$ yields 
\[
  \max_X u_t \le C,
\] 
where $C>0$ is a generic constant independent of $t$.
It follows from the same process in \cite{Wu-Yau:2015} that 
\[
   \| u_t \|_{C^{k+2,\alpha}} \le C.
\]
This shows the closedness of $I$. In particular, $0 \in I$ with corresponding $u_0 \in C^{\infty}(X)$.  This gives us the desired K\"ahler-Einstein metric $dd^c \log \omega^n + dd^c u_0$.
\end{proof}

\section{Quasi-negative holomorphic sectional curvature}
In this section we seek to extend Theorem~\ref{th:WY1} to the case $(X, \omega)$ has \emph{quasi-negative} holomorphic sectional curvature, i.e., the holomorphic sectional curvature of $\omega$ is less than or equal to zero everywhere on $X$ and is strictly negative at one point of $X$. We can prove the following result (compare \cite{Diverio-Trapani:2016}):
\begin{theorem} \label{th:WY2}
Let $(X, \omega)$ be a compact K\"ahler manifold with quasi-negative holomorphic sectional curvature. Then
\begin{equation} \label{eq:intc1n}
    \int_X c_1 (K_X)^n > 0.
\end{equation}
\end{theorem}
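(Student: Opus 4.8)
The plan is to run the continuity method from the proof of Theorem~\ref{th:WY1} essentially verbatim, now with $\kappa$ replaced by a nonnegative \emph{function}, and then to convert the resulting family of solutions into a lower bound on the volume. Quasi-negativity provides a continuous function $\kappa\ge 0$ on $X$ with $-\kappa$ a pointwise upper bound for $H(\omega)$ and $\kappa(x_0)>0$ at some $x_0$. Openness and nonemptiness of the set $I$ in \eqref{eq:defI} are proved exactly as before. For closedness, the Schwarz inequality \eqref{eq:WY-Sch} now reads $\Delta'\log S\ge[\tfrac{t}{n}+\tfrac{n+1}{2n}\kappa]S-1$, and evaluating at a maximum point $p$ of $S$, where $\kappa(p)\ge0$, yields only the weaker bound $S\le n/t$. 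This is finite for each fixed $t>0$, so the a priori estimates of \cite{Wu-Yau:2015} still give $\|u_t\|_{C^{k+2,\alpha}}\le C(t)$, hence closedness of $I$ in $(0,t_1]$. Thus $I=(0,t_1]$, and since $[\omega_t]=t[\omega]+c_1(K_X)$ contains the Kähler form $\omega_t$ for each such $t$, the class $t[\omega]+c_1(K_X)$ is Kähler for every $t>0$; letting $t\to0^+$ shows $c_1(K_X)$ is nef.

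Next I would reduce \eqref{eq:intc1n} to a non-collapsing statement. From $dd^c\log\omega_t^n=\omega_t-t\omega$ (noted above) we get $\Ric(\omega_t)=-\omega_t+t\omega$, so $-\Ric(\omega_t)=\omega_t-t\omega$ represents $c_1(K_X)$ and
\[
   \int_X c_1(K_X)^n=\int_X\bigl(t[\omega]+c_1(K_X)\bigr)^n=\lim_{t\to0^+}\int_X\omega_t^n .
\]
It therefore suffices to bound $\int_X\omega_t^n$ below by a positive constant independent of $t$. For this it is enough to exhibit a single fixed open set on which $\omega_t$ does not collapse: if $S=\tr_{\omega_t}\omega\le C$ on a fixed ball $U'$ for all small $t$, then $\omega_t\ge C^{-1}\omega$ on $U'$, whence $\int_X\omega_t^n\ge\int_{U'}\omega_t^n\ge C^{-n}\vol(U')>0$, giving \eqref{eq:intc1n} in the limit.

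The non-collapsing is where strict negativity enters. Since $H(\omega)$ is continuous and $H(\omega)(x_0)<0$, there exist a ball $U\ni x_0$ and $\delta>0$ with $\kappa\ge\delta$ on $U$, so on $U$ the inequality \eqref{eq:WY-Sch} gives the localized Schwarz/Ahlfors estimate $\Delta'\log S\ge\tfrac{n+1}{2n}\delta\,S-1$, after discarding the nonnegative term $\tfrac{t}{n}S$. From this I would extract an interior bound $S\le C(\delta,U)$ on a smaller ball $U'\Subset U$, \emph{uniform in} $t$, by the maximum-principle technique of \cite{Cheng-Yau:1975}: testing $\log S$ against a cutoff supported in $U$ (or a logarithmic barrier adapted to $\partial U$) and estimating at the interior maximum of the resulting compactly supported function.

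The main obstacle is precisely this localization. In the maximum-principle computation the error terms produced by the cutoff are measured in the metric $\omega_t$, which is allowed to degenerate as $t\to0^+$ — indeed the bound $S\le n/t$ is sharp away from $U$ — so controlling quantities such as $|\partial\eta|^2_{\omega_t}$ and $\Delta'\eta$ without any uniform lower bound on $\omega_t$ is the delicate point, and it is exactly here that the Cheng--Yau argument is needed. Once the uniform interior bound $S\le C(\delta,U)$ on $U'$ is secured, the non-collapsing of the previous paragraph yields $\int_X c_1(K_X)^n\ge C^{-n}\vol(U')>0$, completing the proof.
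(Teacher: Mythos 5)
Your setup follows the paper exactly as far as it goes: the continuity method, closedness of $I$ on each $[t_2,t_1]$ from $\max_X S\le n/t$, and the reduction of \eqref{eq:intc1n} to the non-collapsing statement \eqref{eq:limsupVt}. But the entire difficulty of the theorem lies in the step you have not carried out: a $t$-independent positive lower bound on $\int_X\omega_t^n$. Your plan --- a uniform interior estimate $S\le C(\delta,U)$ on $U'\Subset U$ by a cutoff maximum principle --- is exactly what you concede is ``the main obstacle,'' and invoking ``the Cheng--Yau argument'' by name does not fill it. Worse, the obstruction is structural, not merely delicate. If $\eta$ is a fixed cutoff supported in $U$ and you apply the maximum principle to, say, $\eta^2 S$, then at an interior maximum the cutoff error terms are controlled only by $|\nabla\eta|^2_{\omega_t}\le C_\eta\,\tr_{\omega_t}\omega=C_\eta S$ and $|\Delta'\eta|\le C_\eta S$; after multiplying by $S$ these are of the same quadratic order $S^2$ as the good term $\tfrac{(n+1)\delta}{2n}\eta^2S^2$, and near $\partial U$, where $\eta$ is small, they dominate it, so they cannot be absorbed --- absorbing them would require precisely the uniform control on $\omega_t$ you are trying to prove. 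The alternative, genuine Cheng--Yau localization in $\omega_t$-geodesic balls (using $\Ric(\omega_t)=t\omega-\omega_t\ge-\omega_t$ and Laplacian comparison) fails for a different reason: if $\omega_t$ collapses, a fixed-radius $\omega_t$-ball around $x_0$ need not remain inside $U$, and outside $U$ the function $\kappa$ may vanish, so the inequality $\Delta'\log S\ge\tfrac{(n+1)\delta}{2n}S-1$ is unavailable exactly where the barrier must live.

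The paper sidesteps localization altogether with a global integral argument, and this is the idea your proposal is missing. One first strengthens \eqref{eq:WY-Sch} by the Newton--Maclaurin inequality, $S\ge n e^{-u_t/n}\ge n e^{-\max_X u_t/n}$, and then integrates against $\omega_t^n$, so that $\int_X\Delta'\log S\,\omega_t^n=0$ and the Laplacian term disappears; this produces \eqref{eq:intmaxu}, bounding $e^{-\max_X u_t/n}$ by a ratio of integrals of $\exp(u_t-\max_X u_t-1)$ weighted by $1$ and by $\kappa$. The point is then compactness of these integrands: Lemma~\ref{le:HoTi} (H\"ormander's $L^1$ estimate) together with Lemma~\ref{le:CYlog-v} --- this integral gradient estimate is the actual Cheng--Yau input here, not a local maximum principle --- give, via Corollary~\ref{co:WYcpt}, that $\{\log(1+\max_X u_t-u_t)\}$ is relatively compact in $L^2(X)$; along a subsequence the integrands converge almost everywhere to $\exp(-e^w)$, and dominated convergence keeps both the numerator and, by quasi-negativity ($\kappa>0$ on an open set), the denominator positive in the limit. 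This yields the uniform bound $\max_X u_{t_l}\ge-C$ of \eqref{eq:infmaxu}, from which \eqref{eq:limsupVt} follows either by dominated convergence applied to $\int_X e^{u_{t_l}}\omega^n$ or by Lemma~\ref{le:HoTi} combined with H\"older's inequality. To repair your write-up, replace the unproven interior bound on $S$ with this integral argument; the rest of what you wrote can stand.
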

The proof uses the following two lemmas.
\begin{lemma} \label{le:HoTi}
  Let $\Theta$ be a $(1,1)$ form on a compact K\"ahler manifold $(X, \omega)$ which admits a smooth potential on every coordinate chart $U$, i.e., $\Theta = dd^c h_U$ for some $h_U \in C^{\infty}(U)$. Then for any smooth function $u$ on $X$ satisfying $\Theta + dd^c u \ge 0$ on $X$,
  \[
     \int_X e^{-\beta (u - \max_X u)} \omega^n < C,
  \]
  where $\beta$ and $C$ are positive constants depending only on $\Theta$ and $\omega$.
\end{lemma}

The proof of Lemma~\ref{le:HoTi} is essentially a globalization of the local estimate in \cite[p. 97, Theorem 4.4.5]{Hormander:1990} via the Green's formula on a compact manifold, which becomes standard by now. In fact, Lemma~\ref{le:HoTi} has been known to the second author since the late 1970s.

The next lemma can be viewed as a special case of the second author with Cheng \cite[p. 335, Theorem 1]{Cheng-Yau:1975}.
\begin{lemma} \label{le:CYlog-v}
Let $v$ be a negative $C^2$ function on a compact K\"ahler manifold $(X, \omega)$. Suppose $\Delta_{\omega} v \ge - \varphi$ for some continuous function $\varphi \ge 0$ on $X$. Then,
\[
     \int_X |\nabla \log (-v) |^2 \omega^n \le \frac{1}{\min_X (-v)} \int_X \varphi \, \omega^n.
\]
\end{lemma}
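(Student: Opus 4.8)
The plan is to reduce the inequality to a single integration by parts after passing to the positive function $w = -v$. Since $v$ is a negative $C^2$ function on the \emph{compact} manifold $X$, the function $w = -v$ is $C^2$ and, by continuity on the compact $X$, satisfies $w \ge \min_X(-v) > 0$; in particular $\log w$ is again $C^2$ and every integrand appearing below is continuous, hence integrable. Under this substitution the hypothesis $\Delta_{\omega} v \ge -\varphi$ reads $\Delta_{\omega} w \le \varphi$, and the quantity to be estimated is $\int_X |\nabla \log w|^2 \omega^n = \int_X \frac{|\nabla w|^2}{w^2}\,\omega^n$.

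The first step I would take is to record the pointwise chain-rule identity, valid wherever $w > 0$,
\[
   \Delta_{\omega} \log w = \frac{\Delta_{\omega} w}{w} - \frac{|\nabla w|^2}{w^2},
\]
and rewrite it as $|\nabla \log w|^2 = \frac{\Delta_{\omega} w}{w} - \Delta_{\omega}\log w$. (This identity holds verbatim whether $\Delta_\omega$ denotes the Laplace--Beltrami operator or the complex trace $\operatorname{Tr}_\omega dd^c$, provided the gradient norm $|\nabla\,\cdot\,|^2$ is the one dual to $\omega$ in the same convention; I would fix one convention at the outset.) Integrating against $\omega^n$ over the closed manifold $X$, the divergence term drops out,
\[
   \int_X \Delta_{\omega} \log w \, \omega^n = 0,
\]
either as the integral of a divergence or, equivalently, because $\int_X dd^c \log w \wedge \omega^{n-1} = \int_X d(d^c \log w \wedge \omega^{n-1}) = 0$ by Stokes together with $d\omega = 0$. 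This yields the clean identity $\int_X |\nabla \log(-v)|^2 \omega^n = \int_X \frac{\Delta_{\omega} w}{w}\,\omega^n$.

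The final step is the estimate itself: using $\Delta_{\omega} w \le \varphi$, then $\varphi \ge 0$ together with $w \ge \min_X(-v) > 0$, I would conclude
\[
   \int_X \frac{\Delta_{\omega} w}{w}\,\omega^n \le \int_X \frac{\varphi}{w}\,\omega^n \le \frac{1}{\min_X(-v)} \int_X \varphi\,\omega^n,
\]
which is exactly the asserted bound. There is no serious analytic obstacle here; the only point requiring genuine care is the legitimacy of discarding the $\Delta_{\omega}\log w$ term and of dividing by $w$, and this is precisely where both the compactness of $X$ and the strict negativity of $v$ enter, through the uniform positive lower bound $\min_X(-v) > 0$. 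Beyond that, the sole bookkeeping issue is to track the sign convention for $\Delta_\omega$ so that the hypothesis indeed translates into $\Delta_\omega w \le \varphi$.
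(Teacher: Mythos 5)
Your proof is correct and follows essentially the same route as the paper's: the chain-rule identity $\Delta_\omega \log(-v) = \frac{\Delta_\omega v}{v} - |\nabla \log(-v)|^2$ (yours is the same identity written for $w=-v$), integration over the compact $X$ to kill the Laplacian term, and then the pointwise bound using $\Delta_\omega v \ge -\varphi$, $\varphi \ge 0$, and $-v \ge \min_X(-v) > 0$. The only difference is presentational: you make explicit the sign bookkeeping and the justification for discarding the divergence term, which the paper leaves implicit.
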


\noindent
\emph{Proof of Lemma~\ref{le:CYlog-v}}.
 Note that
\[
   \Delta \log (-v) = \frac{\Delta v}{v} - |\nabla \log (-v)|^2.
\]
Integrating both sides against $\omega^n$ over $X$ yields
\[
     \int_X |\nabla \log (-v)|^2 = \int_X \frac{-\Delta v}{-v} \le \int_X \frac{\varphi}{\min (-v)},
\]
which is the desired estimate.
\qed

\begin{coro} \label{co:WYcpt}
Let $\Theta$ and $u$ satisfy the condition of Lemma~\ref{le:HoTi}. That is, let $\Theta$ be a $(1,1)$ form on a compact K\"ahler manifold $(X, \omega)$ admitting a smooth potential on every coordinate chart $U$, and let $u \in C^{\infty}(X)$ satisfy $\Theta + dd^c u \ge 0$ on $X$. Then $v \equiv u - \max_X u - 1$ satisfies
\begin{equation} \label{eq:W12log-v}
    \int_X |\log (-v)|^2 \omega^n + \int_X |\nabla \log (-v) |^2 \omega^n \le C
\end{equation}
where $C>0$ is a constant depending only on $\Theta$ and $\omega$. Consequently, for any sequence $\{u_l\}_{l=1}^{\infty}$ of smooth functions on $X$ satisfying $\Theta + dd^c u_l \ge 0$, the sequence 
\[
   \big\{ \log (1 + \max_X u_l - u_l)\big\}_{l=1}^{\infty}
\]
is relatively compact in $L^2(X)$. 
\end{coro}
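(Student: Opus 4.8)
The plan is to establish the $W^{1,2}$ bound \eqref{eq:W12log-v} with a constant independent of $u$ by treating its two terms separately, and then to deduce relative $L^2$-compactness from the Rellich--Kondrachov theorem. Throughout, write $w = \log(-v)$, where $v = u - \max_X u - 1$. Since $u \le \max_X u$, we have $-v = 1 + (\max_X u - u) \ge 1$, so $w \ge 0$ (hence $|w|^2 = w^2$) and, crucially, $\min_X(-v) = 1$, the minimum being attained wherever $u$ attains its maximum.

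For the zeroth-order term I would invoke Lemma~\ref{le:HoTi}. Rewriting the exponent in terms of $v$ gives $e^{-\beta(u - \max_X u)} = e^{-\beta}\,e^{-\beta v}$, so Lemma~\ref{le:HoTi} yields $\int_X e^{-\beta v}\,\omega^n \le e^{\beta} C$, a quantity depending only on $\Theta$ and $\omega$. Because $e^{\beta s}$ dominates $(\log s)^2$ on $[1,\infty)$, there is a constant $C_\beta$ with $(\log s)^2 \le C_\beta\, e^{\beta s}$ for all $s \ge 1$; applying this with $s = -v \ge 1$ and integrating gives $\int_X w^2\,\omega^n \le C_\beta \int_X e^{-\beta v}\,\omega^n \le C$, still depending only on $\Theta$ and $\omega$.

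For the gradient term I would apply Lemma~\ref{le:CYlog-v} to $v$. Since $v$ differs from $u$ by a constant, $\Delta_\omega v = \Delta_\omega u$; taking the trace of the hypothesis $\Theta + dd^c u \ge 0$ with respect to $\omega$ gives $\tr_\omega(dd^c u) \ge -\tr_\omega\Theta$, hence $\Delta_\omega u \ge -c\,\tr_\omega\Theta$ for a fixed constant $c > 0$ coming from the normalization relating $\Delta_\omega$ to $\tr_\omega(dd^c \cdot)$. As $\Theta$ and $\omega$ are fixed, $\varphi := \max\{\,c\,\tr_\omega\Theta,\,0\,\}$ is a nonnegative continuous function satisfying $\Delta_\omega v \ge -\varphi$ and depending only on $\Theta,\omega$. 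Lemma~\ref{le:CYlog-v} together with $\min_X(-v) = 1$ then gives $\int_X |\nabla w|^2\,\omega^n \le \int_X \varphi\,\omega^n =: C$, once more depending only on $\Theta,\omega$. Adding the two estimates yields \eqref{eq:W12log-v}.

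Finally, for the stated consequence, I would observe that for each $l$ one has $\log(1 + \max_X u_l - u_l) = \log(-v_l)$ with $v_l = u_l - \max_X u_l - 1$, and that the estimate just proved applies to every $u_l$ with the \emph{same} constant $C$. Thus $\{\log(-v_l)\}$ is bounded in $W^{1,2}(X)$, and the compact Sobolev embedding $W^{1,2}(X) \hookrightarrow L^2(X)$ on the compact manifold $X$ gives relative compactness in $L^2(X)$. The only point requiring genuine care is the bookkeeping that keeps every constant independent of $u$; this rests on the two structural facts $\min_X(-v) = 1$ and that $\tr_\omega\Theta$ is a fixed function. I therefore expect the trace step — confirming $\Delta_\omega v \ge -\varphi$ with $\varphi$ nonnegative and independent of $u$ — to be the only place demanding attention, the remainder being the two cited lemmas plus a standard Rellich argument.
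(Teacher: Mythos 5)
Your proposal is correct and follows essentially the same route as the paper: the gradient term is bounded by applying Lemma~\ref{le:CYlog-v} with $\min_X(-v)=1$, the zeroth-order term by combining Lemma~\ref{le:HoTi} with the fact that $e^{\beta s}$ dominates $(\log s)^2$ for $s \ge 1$ (the paper phrases this via $e^t \ge t^N/N! \ge (\log t)^N/N!$ plus H\"older, which is the same estimate), and the compactness claim by Rellich. Your two small departures---packaging the domination as a single inequality $(\log s)^2 \le C_\beta e^{\beta s}$, and replacing $\tr_\omega\Theta$ by $\max\{c\,\tr_\omega\Theta,0\}$ to guarantee the nonnegativity that Lemma~\ref{le:CYlog-v} requires---are harmless refinements, the latter in fact tidying a point the paper leaves implicit.
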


\noindent
\emph{Proof of Corollary~\ref{co:WYcpt}}.
  By the hypothesis $\Theta + dd^c u \ge 0$ we have
\[
    \Delta_{\omega} u \ge - \textup{tr}_{\omega} \Theta.
\]
Applying Lemma~\ref{le:CYlog-v} to $v = u - \max_X u - 1$ yields
\[
    \int_X |\nabla \log (-v)|^2 \omega^n \le \int_X \textup{tr}_{\omega} \Theta \, \omega^n,
\]
since $\min_X (-v) = 1$. To bound the $L^2$-norm of $\log (-v)$, recall Lemma~\ref{le:HoTi} that
\[
    \int_X e^{-\beta (u - \max_X u - 1)} \omega^n \le C \int_X e^{\beta} \omega^n
\]
where the constants $C>0$ and $\beta>0$ depend only on $\Theta$ and $\omega$. Observe that
\[
    t \ge \log t, \quad \textup{for all $t \ge 1$}.
\] 
 Then,
\[
    e^t \ge \frac{t^N}{N!} \ge \frac{(\log t)^N}{N!} \quad \textup{for all $N \ge 0$, $t \ge 1$}.
\]
Hence,
\[
    e^{\beta (-v)} \ge \frac{[\log (-v)]^{N \beta }}{(N!)^{\beta}}.
\]
Choose a large integer $N$ such that $N \beta \ge 2$. Since $\beta$ depends only on $\Theta$ and $\omega$, so is $N$. Applying H\"older's inequality, if needed, yields
\[
   \int_X [\log (-v)]^2 \omega^n \le C_1,
\]
where $C_1>0$ is a constant depending only on $\Theta$ and $\omega$. Therefore, we have established inequality \eqref{eq:W12log-v}, which implies the second statement, in view of the standard Rellich Lemma (see \cite[p. 37, Theorem 2.9]{Hebey:1999} for example).
\qed

Let us now proceed to show Theorem~\ref{th:WY2}.

\noindent
\emph{Proof of Theorem~\ref{th:WY2}.}
We shall use the continuity method, as in the proof of Theorem~\ref{th:WY1}. Let $u_t \in C^{\infty}(X)$, $t> 0$, be the solution of 
\[
   \omega_t^n = (t \omega + dd^c \log \omega^n + dd^c u_t)^n = e^{u_t} \omega^n
\]
with $\omega_t \equiv t \omega + dd^c \log \omega^n + dd^c u_t > 0$ on $X$. Let $I$ be the interval defined by \eqref{eq:defI} in the previous proof. Then, $I$ contains a large number $t_1$ and $I$ is open, by the same argument in the proof of Theorem~\ref{th:WY1}.

We \textbf{claim} that $I$ contains every $t$ in the interval $(0, t_1]$ provided that the holomorphic sectional curvature $H(\omega) \le 0$ (which, in particular, implies that $K_X$ is nef if $H (\omega) \le 0$; compare \cite{Tosatti-Yang:2015}). Fix an arbitrary $t_2 \in (0, t_1)$. For any $t_2 \le t \le t_1$, by \eqref{eq:WY-Sch} we have
\begin{align*}
   \Delta' \log S 
    \ge \Big[ \frac{t}{n} + \frac{(n+1)\kappa}{2n} \Big] S - 1 \ge \frac{t}{n} S - 1,
\end{align*}
since $H(\omega) \le - \kappa$ with $\kappa \ge 0$ on $X$. By the standard maximum principle,
\[
   \max_X S \le \frac{n}{t} \le \frac{n}{t_2}, \quad \textup{for all $t \ge t_2$}.
\]
On the other hand, applying the maximum principle to the Monge-Amp\`ere equation yields
\begin{equation} \label{eq:maxu}
   \max_X u_t \le C \quad \textup{for all $t > 0$}.
\end{equation}
Here and below, we denote by $C>0$ a constant depending only on $n$ and $\omega$, unless otherwise indicated.
Processing as \cite{Wu-Yau:2015} yields
\[
    C t_2 \, \omega \le \omega_t \le \frac{C}{t_2^{n-1}} \omega.
\]
and
\[
    \inf_X u_t \ge C \log t_2.
\]
This implies the estimate for $u_t$ up to the second order. The estimate constants may depend on $t_2$, which is fixed. One can then apply, either the local H\"older estimate of the second order, or the third order estimate, to obtain a bound for the $C^{2,\alpha}(X)$-norm of $u_t$; and hence a bound for $C^{k,\alpha}(X)$-norm of $u_t$. The bound may depend on $t_2$. This show the closedness of $I$ up to the subinterval $[t_2, t_1]$. In particular, $t_2 \in I$. Since $t_2$ is arbitrary, we have proven the claim.

Using the claim, to prove \eqref{eq:intc1n} it is equivalent to show
\begin{equation} \label{eq:limsupVt}
   \limsup_{t \to 0} \int_X \omega_t^n > 0, 
\end{equation}
since
\[
   \int_X \omega_t^n = \int_X c_1(K_X)^n +  n t \int_X c_1(K_X)^{n-1} \wedge \omega + O(t^2)
\]
as $t \to 0$. Using inequality \eqref{eq:WY-Sch} again we have
\[
   \Delta' \log S \ge \frac{(n+1)\kappa}{2n} S - 1 \ge \frac{(n+1)\kappa}{2} 
  \exp \left(-\frac{\max_X u_t}{n}\right) - 1, 
\]
where we apply the Netwon-Maclaurin's inequality 
\[
    S = \frac{\sigma_{n-1}}{\sigma_n} \ge n \sigma_n^{-1/n} = n e^{-u_t / n}, \quad t > 0.
\]
Integrating against $\omega_t^n$ over $X$ yields
\begin{align}
    \exp\Big(- \frac{\max_X u_t}{n}\Big)
    & \le \frac{\int_X \omega_t^n}{\frac{n+1}{2} \int_X \kappa \, \omega_t^n} \notag \\
    & = \frac{\int_X \exp(u_t - \max_X u_t -1) \omega^n}{\frac{n+1}{2}\int_X  \kappa  \exp(u_t - \max_X u_t - 1) \, \omega^n}. \label{eq:intmaxu}
\end{align}
Since $t_1\omega + dd^c \log \omega^n + dd^c u_t \ge \omega_t > 0$ for each $0< t \le t_1$, applying Corollary~\ref{co:WYcpt} with $\Theta = t_1 \omega + dd^c \log \omega^n$ yields that the set
\[
   \Big\{ \log \big(1 + \max_X u_t - u_t \big); 0 < t \le t_1\Big\}
\]
is relatively compact in $L^2(X)$. Then, a sequence $\{ \log (1 + \max_X u_{t_l} - u_{t_l})\}_{l=1}^{\infty}$ converges in $L^2(X)$ to a function $w \in L^2(X)$. The standard $L^p$ theory (cf. \cite[p. 30, Corollary 2.7]{Adams-Fournier:2003} for example) implies that a subsequence, still denoted by $\{ \log (1 + \max_X u_{t_l} - u_{t_l})\}$, converges to $w$ almost everywhere on $X$. It follows that 
\[
    \exp(u_{t_l} - \max_X u_{t_l} - 1) \longrightarrow \exp(-e^w)
\]
almost everywhere on $X$, as $l \to +\infty$. Since $\exp( u_{t_l} - \max_X u_{t_l} - 1) \le 1$ for each $l$, it follows from the Lebesgue dominated convergence theorem that $\exp (-e^w) \in L^1(X)$ and
\[
   \int_X \exp ( u_{t_l} - \max_X u_{t_l} - 1) \, \omega^n 
    \longrightarrow \int_X \exp(-e^w) \, \omega^n > 0.
\]
Moreover, since $\kappa \ge 0$ on $X$ and $\kappa > 0$ in a neighborhood of $X$, we have
\[
   \int_X \kappa \exp (u_{t_l} - \max_X u_{t_l} - 1) \, \omega^n
    \longrightarrow \int_X \kappa \exp(-e^w) \, \omega^n > 0.
\]
Plugging these back to \eqref{eq:intmaxu} yields
\begin{equation} \label{eq:infmaxu}
   \lim_{l \to +\infty} \big(\max_X u_{t_l}\big) \ge - C,
\end{equation}
where the constant $C>0$ depends only on $n$ and $\omega$. 

From \eqref{eq:infmaxu} we have \emph{two slightly different ways} to conclude \eqref{eq:limsupVt}. For the first way, note that combining \eqref{eq:infmaxu} and the previous upper bound \eqref{eq:maxu} we know the constant sequence $\{ \max_X u_{t_l} \}_{l=1}^{\infty}$ is bounded. Thus, by further passing to a subsequence we can assume $u_{t_l}$ converges to $(-e^w+c)$ almost everywhere on $X$, where $c$ is a constant. By the upper bound \eqref{eq:maxu} we can apply the Lebesgue dominated convergence theorem to conclude
\[
   \int_X \omega_{t_l}^n = \int_X e^{u_{t_l}} \omega^n \longrightarrow \int_X \exp(- e^w + c) \, \omega^n > 0,
\] 
as $l \to +\infty$. This proves the desired \eqref{eq:limsupVt}.

A second way to see \eqref{eq:limsupVt} is to plug \eqref{eq:infmaxu} back to the integral inequality in Lemma~\ref{le:HoTi} (with $\Theta = t_1 \omega + dd^c \log \omega^n$). This gives
\[
   \int_X e^{-\beta u_{t_l} } \omega^n \le C \quad \textup{for all $l \ge 1$},
\]
where $\beta>0$ is a constant depending only on $\omega$. It follows that
\begin{align*}
    \int_X \omega^n 
    & = \int_X e^{-\frac{\beta}{\beta +1} u_{t_l}} \, e^{\frac{\beta}{\beta+1} u_{t_l}} \omega^n \\
    & \le \Big(\int_X e^{-\beta u_{t_l}} \Big)^{\frac{1}{\beta+1}} \Big(\int_X e^{u_{t_l}} \omega^n\Big)^{\frac{\beta}{\beta+1}}  \quad \textup{(by H\"older's inequality)} \\
    & = C^{\frac{1}{\beta+1}} \Big(\int_X \omega_{t_l}^n \Big)^{\frac{\beta}{\beta+1}}.
\end{align*}
Hence,
\[
   \int_X \omega_{t_l}^n \ge C^{-1/\beta} \Big(\int_X \omega^n \Big)^{\frac{\beta+1}{\beta}} > 0, \quad \textup{for all $l \ge 1$}.
\]
This again confirms \eqref{eq:limsupVt}. The proof of Theorem~\ref{th:WY2} is therefore completed.
\qed

\begin{remark}
 If $X$ is projective then by \cite[Lemma 5]{Wu-Yau:2015} we know $K_X$ is ample. When $X$ is K\"ahler, Diverio-Trapani~\cite{Diverio-Trapani:2016} points out that \eqref{eq:intc1n} implies $X$ is Moishezon, by using a result of \cite{Demailly-Paun:2004}. Then a theorem of Moishezon tells us that $X$ is projective (see \cite[p. 442, Theorem 2]{Hartshorne:1977} for example); hence, $K_X$ is ample by reducing to the projective case.
\end{remark}

\bibliographystyle{alpha}
\bibliography{../../../Bib/DWu}

\end{document}